\newtheorem{thm}{Theorem}[section]
\newtheorem{lemm}{Lemma}[section]
\newtheorem{exs}{Examples}[section]
\newtheorem{rem}{Remark}[section]
\newcommand{\Ric}{\operatorname{Ric}}
\newcommand{\scal}{\operatorname{scal}}
\newcommand{\tr}{\operatorname{tr}}
\def\J{{\sf J}}
\def\S{{\mathbb S}}        
\def\B{{\mathcal B}}       
\def\CC{{\bf C}}           
\def\G{{\mathcal G}}       
\def\M{{\mathcal M}}
\def\Q{{\mathcal Q}}       
\def\Rho{{\sf P}}          
\def\T{{\mathcal T}}       
\def\f{{\frac{n}{2}}}
\def\st{\stackrel{\text{def}}{=}}
\numberwithin{equation}{section}
\title{On the recursive structure of Branson's $Q$-curvature}
\author{Andreas Juhl}
\address{Humboldt-Universit\"at, Institut f\"ur Mathematik,
Unter den Linden, D-10099 Berlin, Germany}
\email{ajuhl@math.hu-berlin.de}
\address{University Uppsala, Department of Mathematics, P.O. Box 480,
S-75106 Uppsala, Sweden}
\email{andreasj@math.uu.se}
\begin{document}

\begin{abstract} We prove universal recursive formulas for Branson's
$Q$-curvatures in terms of respective lower-order $Q$-curvatures,
lower-order GJMS-operators and holographic coefficients.
\end{abstract}

\subjclass[2010]{Primary 53B20 53A30; Secondary 58J50}

\maketitle

\centerline \today

\footnotetext{The work was supported by SFB 647 "Raum-Zeit-Materie"
of DFG.}

\tableofcontents


\section{Introduction and formulation of the main result}

On any Riemannian manifold $(M,g)$ of {\em even} dimension $n$,
there is a finite sequence $P_2(g), P_4(g), \dots, P_n(g)$ of
geometric differential operators of the form\footnote{We use the
convention that $-\Delta \ge 0$}
$$
\Delta_g^N + \mbox{lower order terms}
$$
which are conformally covariant in the sense that
\begin{equation}\label{covar}
e^{(\f+N)\varphi} P_{2N}(e^{2\varphi}g)(u) =
P_{2N}(g)(e^{(\f-N)\varphi}u)
\end{equation}
for all $\varphi \in C^\infty(M)$. Similarly, on manifolds of {\em
odd} dimension there is an infinite sequence $P_2(g), P_4(g), \dots$
of geometric operators satisfying \eqref{covar}. The operators
$P_{2N}(g)$ are {\em geometric} in the sense that the lower order
terms are determined by the metric and its curvature. These
operators were constructed in the seminal work \cite{GJMS}. They
will be referred to as the GJMS-operators.

The constant terms of the GJMS-operators lead to the notion of
Branson's $Q$-curvatures (see \cite{bran-2}). In fact, for $2N < n$
it is natural to write the constant term of $P_{2N}$ in the form
\begin{equation}\label{Q-curv}
P_{2N}(g)(1) = (-1)^N \left(\f-N\right) Q_{2N}(g)
\end{equation}
with a scalar Riemannian curvature invariant $Q_{2N}(g) \in
C^\infty(M)$ of order $2N$. For even $n$, the {\em critical}
GJMS-operator $P_n$ has vanishing constant term and \eqref{Q-curv}
cannot be used to define an analogous quantity $Q_n$. However, $Q_n$
can be defined through $Q_{2N}$ for $2N<n$ by a continuation in the
dimension. The quantities $Q_{2N}$ will be called Branson's
$Q$-curvatures. For even $n$, $Q_n$ will be called the {\em
critical} $Q$-curvature.

The following two special cases are well-known. We have
$$
Q_2 = \frac{\scal}{2(n-1)}
$$
and
\begin{equation}\label{paneitz}
Q_4 = \f \J^2 - 2 |\Rho|^2 - \Delta (\J), \; n \ge 3,
\end{equation}
where we used the abbreviations
$$
\J = \frac{\scal}{2(n-1)} \quad \mbox{and} \quad \Rho =
\frac{1}{n-2}(\Ric-\J g).
$$
$\Rho$ is the Schouten tensor. The quantities $Q_2$ and $Q_4$ appear
in the corresponding Yamabe and Paneitz operators
$$
P_2 = \Delta - \left(\f\!-\!1\right) Q_2
$$
and
$$
P_4 = \Delta^2 + \delta ((n\!-\!2)\J - 4 \Rho) d +
\left(\f\!-\!2\right) Q_4.
$$

The main purpose of this paper is to establish formulas for all
higher order $Q$-curvatures.

In order to formulate the main result, we need some more notation.

First, a sequence $I=(I_1,\dots,I_r)$ of integers $I_j \ge 1$ will
be regarded as a composition of the sum $|I| = I_1 + I_2 + \cdots +
I_r$, where two representations which contain the same summands but
differ in the order of the summands are regarded as different. $|I|$
will be called the size of $I$. For $I=(I_1,\dots,I_r)$, we set
$$
P_{2I} = P_{2I_1} \circ \cdots \circ P_{2I_r}.
$$
We define the multiplicity $m_I$ of the composition $I$ by
\begin{equation}\label{m-form}
m_I = -(-1)^r |I|! \, (|I|\!-\!1)! \prod_{j=1}^r \frac{1}{I_j! \,
(I_j\!-\!1)!} \prod_{j=1}^{r-1} \frac{1}{I_j \!+\! I_{j+1}}.
\end{equation}
Here, an empty product has to be interpreted as $1$. Note that
$m_{(N)} = 1$ for all $N \ge 1$ and
$$
\sum_{|I|=N} m_I = 0
$$
(see Lemma 2.1 in \cite{juhl-power}). In these terms, we introduce
the generating function
\begin{equation}\label{G}
\G(r) = 1 + \sum_{N \ge 1} \left( \sum_{a+|J|=N} m_{(J,a)} (-1)^a
P_{2J}(Q_{2a}) \right) \frac{r^N}{N!(N\!-\!1)!}.
\end{equation}
Here, for even $n$, the sum on the right-hand side is to be
understood as a finite sum over $1 \le N \le n$.

A second ingredient of our formula for $Q$-curvatures comes from
Poincar\'e-Einstein metrics. Let $n$ be even. For a given metric $g$
on the manifold $M$ of dimension $n$, let
\begin{equation}\label{PE1}
g_+ = r^{-2} (dr^2 + g_r)
\end{equation}
with
\begin{equation}\label{PE2}
g_r = g + r^2 g_{(2)} + \dots + r^{n-2} g_{(n-2)} + r^n (g_{(n)}+
\log r \bar{g}_{(n)}) + \cdots
\end{equation}
be a metric on $X = (0,\varepsilon) \times M$ so that the tensor
$\Ric(g_+) + n g_+$ satisfies the Einstein condition
\begin{equation}\label{einstein}
\Ric(g_+) + n g_+ = O(r^{n-2})
\end{equation}
together with a certain vanishing trace condition. These conditions
uniquely determine the coefficients $g_{(2)}, \dots, g_{(n-2)}$ and
the quantity $\tr_g(g_{(n)})$. They are given as polynomial formulas
in terms of $g$, its inverse, the curvature tensor of $g$, and its
covariant derivatives. A metric $g_+$ with these properties is
called a Poincar\'e-Einstein metric with conformal infinity $[g]$.
Similarly, for odd $n$, the Einstein condition determines all
coefficients in the formal power series
$$
g_r = g + r^2 g_{(2)} + r^4 g_{(4)} + \cdots
$$
with only even powers of $r$. For full details see \cite{FG-final}.
The volume form of $g_+$ can be written as
$$
vol(g_+) = r^{-n-1} v(r) dr vol(g),
$$
where
$$
v(r) = vol(g_r)/vol(g) \in C^\infty(M).
$$
The coefficients in the Taylor series
$$
v(r) = v_0 + v_2 r^2 + v_4 r^4 + \cdots
$$
are known as the {\em renormalized volume} coefficients
\cite{G-vol}, \cite{G-ext} or {\em holographic} coefficients
\cite{juhl-book}. The coefficient $v_{2j} \in C^\infty(M)$ is given
by a local formula which involves at most $2j$ derivatives of the
metric.

The following theorem is the main result of the present paper. It
settles Conjecture 9.2 in \cite{juhl-power}.

\begin{thm}\label{Q-GF} On any Riemannian manifold $M$ of dimension
$n \ge 3$,
\begin{equation}\label{Q-G}
\G \left( \frac{r^2}{4} \right) = \sqrt{v(r)}.
\end{equation}
\end{thm}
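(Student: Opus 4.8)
The plan is to prove \eqref{Q-G} by comparing Taylor coefficients in powers of $r^2$ and reducing the statement to a recursion that I establish by induction on the degree. Write $v(r) = 1 + \sum_{N \ge 1} v_{2N} r^{2N}$ (note $v_0 = 1$) and $\sqrt{v(r)} = 1 + \sum_{N\ge 1} b_N r^{2N}$, so that the $b_N$ are determined from $v_2, \dots, v_{2N}$ by the square-root recursion $2b_N = v_{2N} - \sum_{k=1}^{N-1} b_k b_{N-k}$, $b_0 = 1$. Substituting $r^2/4$ into \eqref{G}, the coefficient of $r^{2N}$ on the left of \eqref{Q-G} is $c_N/(4^N N!(N-1)!)$ with $c_N = \sum_{a+|J|=N} m_{(J,a)}(-1)^a P_{2J}(Q_{2a})$. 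Hence \eqref{Q-G} is equivalent to the family of identities $c_N = 4^N N!(N-1)!\, b_N$ for all $N \ge 1$. Since the empty composition contributes the term $m_{(N)}(-1)^N Q_{2N} = (-1)^N Q_{2N}$, this isolates $Q_{2N}$ and expresses it through $b_N$ and the strictly lower-order data $P_{2J}(Q_{2a})$ with $a < N$; this is precisely the asserted universal recursive formula, so it suffices to prove the coefficient identities.

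Next I would obtain an independent handle on $v_{2N}$ and on the operators $P_{2N}, Q_{2N}$ from a single source on the Poincar\'e--Einstein space $(X,g_+)$. On one hand, the volume series satisfies $\partial_r \log v(r) = \tfrac12 \tr(g_r^{-1} \partial_r g_r)$, and the Einstein condition \eqref{einstein} imposes a second-order ODE on the family $g_r$ which turns this into a recursion determining the $v_{2N}$ from the curvature of $g$. On the other hand, the GJMS operators and $Q$-curvatures arise from the formal asymptotics of solutions $u_s$ of $-\Delta_{g_+} u_s = s(n-s) u_s$: the coefficients of the two indicial series are polynomial in $s$, built from the $P_{2N}$, and evaluation at the distinguished boundary value $f = 1$ produces the $Q_{2N}$. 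The conceptual point is that both generating series in \eqref{Q-G} are shadows of the same scattering data attached to $g_+$, and the substitution $r^2/4$ together with the square root reflects the weight $\tfrac12(\f - N)$ in the conformal transformation law \eqref{covar}.

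With these tools I would run the induction. Assuming the coefficient identity for all degrees $< N$, I substitute the inductive expressions into the square-root recursion for $b_N$ and reorganize the resulting double sum over compositions $(J,a)$. The combinatorial identities for the multiplicities are exactly what make this reorganization close up: the normalization $m_{(N)} = 1$ and the vanishing $\sum_{|I|=N} m_I = 0$ from Lemma 2.1 of \cite{juhl-power}, together with the explicit factorization \eqref{m-form}, are tailored so that concatenation of compositions matches the convolution $b_k b_{N-k}$, and the weights $1/(N!(N-1)!)$ make the numerical factors align after the rescaling by $4^N$.

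The step I expect to be the main obstacle is this last reorganization at the level of operators rather than numbers. The quantities $P_{2J}(Q_{2a})$ are composite differential operators applied to curvature invariants, whereas the square-root recursion for $v(r)$ is a nonlinear scalar convolution; there is no formal reason the two should be compatible. The crux is therefore to rewrite the Einstein-equation recursion for $v_{2N}$ itself inside the algebra generated by the GJMS operators acting on $Q$-curvatures, so that both sides of the desired identity live in the same module and the multiplicities $m_I$ can perform the bookkeeping. Establishing this operator-level compatibility, and not the purely numerical combinatorics of the $m_I$, is where the real work of the proof lies.
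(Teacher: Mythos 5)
Your reduction of \eqref{Q-G} to the coefficient identities \eqref{equiv} is correct and matches the paper's first step (your $b_N$ is the paper's $w_{2N}$ and $4^N=2^{2N}$). But from there the proposal does not contain a proof: you explicitly defer ``the operator-level compatibility'' between the Einstein-equation recursion for $v_{2N}$ and the algebra of GJMS-operators acting on $Q$-curvatures to a final step which you acknowledge is ``where the real work of the proof lies,'' and you give no mechanism for carrying it out. The proposed induction via the square-root convolution $2b_N=v_{2N}-\sum b_kb_{N-k}$, combined with a reorganization of the sum over compositions governed by $m_{(N)}=1$ and $\sum_{|I|=N}m_I=0$, is not the paper's route and there is no reason it closes up: the identity $\G^2(r^2/4)=v(r)$ and the resulting convolution relations for the quantities $\Lambda_{2M}$ are stated in the paper as \emph{consequences} of Theorem \ref{Q-GF}, not as an independent input, so using them in the induction would be circular.

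The missing idea is the interpolating object that actually links the two sides: the $Q$-curvature polynomial $Q_{2N}^{res}(\lambda)$, defined through the residue families $D_{2N}^{res}(\lambda)$ attached to the Poincar\'e--Einstein metric. The paper computes the leading coefficient of $Q_{2N}^{res}(\lambda)$ in two ways. On one side, Proposition 4.2 of \cite{Q8} identifies it as $-2^{2N}N!\,w_{2N}$, which is where the holographic coefficients enter. On the other side, the factorization identities \eqref{special-all} for residue families, together with the vanishing $Q_{2N}^{res}(0)=0$, show that $\lambda^{-1}Q_{2N}^{res}(\lambda)$ satisfies exactly the system of relations \eqref{fact-a}--\eqref{fact-b} that uniquely characterizes the combinatorially defined polynomial $\pi_{2N}(\lambda)$ of Section \ref{poly}; Theorem \ref{factor} (whose proof is the Stirling-number computation of Lemma \ref{L}) then yields the leading coefficient as $-\frac{1}{(N-1)!}\sum_{a+|J|=N}m_{(J,a)}(-1)^aP_{2J}(Q_{2a})$. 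Equating the two evaluations gives \eqref{equiv} for odd $n$ and for even $n\ge 4N$, and a separate dimensional-continuation argument (rationality in $n$ of all universal coefficients) is needed to cover even $n$ with $2N\le n<4N$ --- a case distinction your proposal does not address at all. Without the residue families and Theorem \ref{factor}, or some substitute bridge of comparable strength, the argument cannot be completed.
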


Some comments are in order. The relation \eqref{Q-G} is to be
understood as an identity of formal power series in $r$. Moreover,
for even $n$, it is to be understood as an identity of finite power
series terminating at $r^n$. The Taylor coefficients of
$$
w(r) = \sqrt{v(r)} = 1 + w_2 r^2 + w_4 r^4 + \cdots
$$
can be expressed in terms of the holographic coefficients $v_2, v_4,
\cdots$. In particular, we have
\begin{align*}
2 w_2 & = v_2, \\
8 w_4 & = 4 v_4 - v_2^2, \\
16 w_6 & = 8 v_6 - 4 v_4 v_2 + v_2^3, \\
128 w_8 & = 64 v_8 - 32 v_6 v_2 - 16 v_4^2 + 24 v_2^2 v_4 - 5 v_2^4.
\end{align*}
Note that
\begin{equation}\label{v2v4}
-2 v_2 = \J \quad \mbox{and} \quad 8 v_4 = \J^2 - |\Rho|^2.
\end{equation}
Graham \cite{G-ext} describes an algorithm to derive formulas for
holographic coefficients in terms of the metric, and displays
explicit formulas for $v_6$ and $v_8$. In particular,
$$
- 48 v_6 = 6 \tr (\wedge^3 \Rho) - 2 (\Omega^{(1)},\Rho),
$$
where $\Omega^{(1)}$ denote Graham's first extended obstruction
tensor.\footnote{In more familiar terms, $\Omega^{(1)}$ equals $
\B/4-n$, where $\B$ is a version of the Bach tensor in dimension
$n$.} We refer to \cite{juhl-book} for the details of such
calculations. Note also that for locally conformally flat metrics,
\begin{equation}\label{c-flat}
(-2)^N v_{2N} = \tr (\wedge^N \Rho).
\end{equation}

Theorem \ref{Q-GF} provides recursive formulas for $Q_{2N}$ in the
following way. For any $N \ge 1$ (so that $2N \le n$ if $n$ is
even), \eqref{Q-G} states that
\begin{equation}\label{equiv}
\sum_{a+|J|=N} m_{(J,a)} (-1)^a P_{2J}(Q_{2a}) = 2^{2N} N!
(N\!-\!1)! w_{2N}.
\end{equation}
One of the $2^{N-1}$ items in the sum on the left-hand side of
\eqref{equiv} is $(-1)^N Q_{2N}$. All other items are defined in
terms of lower-order GJMS-operators acting on lower-order
$Q$-curvatures.

The relation \eqref{equiv} can be regarded as a formula for the
difference
$$
Q_{2N} - (-1)^N 2^{2N-1} N! (N\!-\!1)! v_{2N}.
$$
An alternative formula for the same difference is given by the {\em
holographic} formula
$$
2N c_N Q_{2N} = \sum_{j=0}^{N-1} (N\!-\!j)
\T_{2j}^*\left(\f\!-\!N\right)(v_{2N-2j});
$$
for the notation see Section \ref{curv}. In the critical case
$2N=n$, the latter formula was proved in \cite{gj}. For the general
case we refer to \cite{master}.

The identities \eqref{Q-G} are valid in {\em all} dimensions. This
feature will be referred to as {\em universality}.

For the convenience of the reader, we display the explicit formulas
for the four lowest order $Q$-curvatures. We find
\begin{align*}
Q_2 & = - 4 w_2, \\
Q_4 & = - P_2(Q_2) + 2^4 2! w_4, \\
Q_6 & = -2P_2(Q_4) + 2 P_4(Q_2) - 3 P_2^2(Q_2) - 2^6 3! 2! w_6
\end{align*}
and
\begin{multline}\label{Q8}
Q_8 = -3P_2(Q_6) - 3 P_6(Q_2) + 9P_4(Q_4) \\ + 8 P_2P_4(Q_2) - 12
P_2^2(Q_4) + 12 P_4P_2(Q_2) - 18 P_2^3(Q_2) + 2^8 4! 3! w_8.
\end{multline}
By $P_2 = \Delta-(\f\!-\!1)\J$ and \eqref{v2v4}, the formula for
$Q_4$ is easily seen to be equivalent to \eqref{paneitz}. Similarly,
combining formula \eqref{Q8} with recursive formulas for $P_4$ and
$P_6$ yields more explicit presentations of $Q_8$. For the details
concerning such consequences we refer to \cite{Q8}.

For round spheres $\S^n$, Theorem \ref{Q-GF} was proved in
\cite{juhl-power} by direct summation of the left-hand side. The
expectation that Theorem \ref{Q-GF} holds true also for
pseudo-Riemannian metrics is supported by the summation formulas on
$\S^{p,q}$ proved in \cite{JK}.

Finally, we emphasize that the variable $r$ plays different roles on
both sides of \eqref{Q-G}. In fact, $r$ is used as a formal variable
of a generating function on the left-hand side and as a defining
function on the right-hand side.

The paper is organized as follows. In Section \ref{poly}, we
establish explicit formulas for a sequence of recursively defined
operator-valued polynomials $\pi_{2N}(\lambda)$. These are closely
related to the $Q$-curvature polynomials $Q_{2N}^{res}(\lambda)$
that were introduced in \cite{juhl-book}. In Section \ref{curv}, we
recall this concept and show that the relation implies Theorem
\ref{Q-GF}, when combined with a result of \cite{Q8}. In Section
\ref{sum}, we prove a summation formula for GJMS-operators on round
spheres which is parallel to a summation formula for GJMS-operators
proved in \cite{juhl-power}.

We are grateful to Carsten Falk for his help with computer
experiments in the early stages of the work. The results were
presented at the conference ``Geometric Scattering Theory and
Applications'' at the Banff International Research Station (March,
2010).

\section{The polynomials $\pi_{2N}(\lambda)$}\label{poly}

In the present section, we discuss a sequence of operator-valued
polynomials which are closely related to the $Q$-curvature
polynomials. That relation will be important in Section \ref{curv}.

We start by defining some higher analogs of the multiplicities
$m_I$. We set $m_I^{(1)} = m_I$, and define the rational numbers
$m_I^{(k)}$ for $k \ge 2$ by the formulas
\begin{equation}\label{m-a}
m_{(a,J)}^{(k)} = \frac{\sum_{j=0}^{k-1} s(N,N-j) |J|^{k-1-j}}{(N-1)
\cdots (N-k+1)} \, m_{(a,J)}^{(1)}
\end{equation}
if $a+|J|=N$ and $2 \le k \le N-1$, and
\begin {equation}\label{m-b}
m_{(N)}^{(k)} = \frac{s(N,N-k+1)}{(N-1) \cdots (N-k+1)} \,
m_{(N)}^{(1)}
\end{equation}
for $2 \le k \le N$. Note that \eqref{m-b} (for $2 \le k \le N-1$)
can be regarded as the special case $J = (0)$ of \eqref{m-a}.

Here, $s(n,m)$ are the Stirling numbers of the first kind. These are
defined by the generating functions
\begin{equation}\label{S-GF}
\sum_{k=0}^n s(n,k) x^k = x(x\!-\!1)\cdots(x\!-\!n\!+\!1) = b_n(x).
\end{equation}
In particular, we have
\begin{equation}\label{s-ex}
s(n,1) = (-1)^{n-1} (n\!-\!1)!, \quad s(n,n\!-\!1) = - \binom{n}{2}
\quad \mbox{and} \quad s(n,n) = 1.
\end{equation}

Note that the definitions show that
\begin{equation}\label{m-ex}
m_{(a,J)}^{(2)} = \frac{s(N,N) |J| + s(N,N-1)}{N\!-\!1} \,
m_{(a,J)}^{(1)} = \left( \frac{N\!-\!a}{N\!-\!1} -
\frac{N}{2}\right) m_{(a,J)}^{(1)}
\end{equation}
if $a+|J|=N$, and
$$
m_{(N)}^{(2)} = \frac{s(N,N\!-\!1)}{N\!-\!1} = - \frac{N}{2}
m_{(N)}^{(1)}.
$$

Finally, we use the operators
\begin{equation}\label{C1}
\CC_{2N}^{(k)} = \sum_{|I|=N} m_I^{(k)} P_{2I} \quad \mbox{for $1
\le k \le N-1$}
\end{equation}
and
\begin{equation}\label{C2}
\CC_{2N}^{(N)} = (-1)^{N-1} P_{2N}
\end{equation}
to define the operator-valued polynomials
\begin{equation}\label{pol}
\pi_{2N}(\lambda) = \sum_{k=1}^N \CC_{2N}^{(k)} \frac{1}{(N\!-\!k)!}
\left(\lambda\!+\!\f\!-\!N\right)^{N-k}, \; N \ge 1.
\end{equation}

We display explicit formulas for these polynomials for $N \le 3$.

\begin{exs}\label{P-ex} We have, $\pi_2(\lambda) = P_2$,
$$
\pi_4(\lambda) = (P_4-P_2^2) \left(\lambda\!+\!\f\!-\!2\right)- P_4
$$
and
\begin{multline*}
\pi_6(\lambda) = (P_6 - 2P_4P_2 - 2 P_2 P_4 + 3 P_2^3) \frac{1}{2!}
\left(\lambda\!+\!\f\!-\!3\right)^2 \\
+ \left(-\frac{3}{2} P_6 + 2P_4P_2 + P_2 P_4 - \frac{3}{2} P_2^3
\right) \left(\lambda\!+\!\f\!-\!3\right) + P_6.
\end{multline*}
\end{exs}

Note that $m_I = m_{I^{-1}}$, where $I^{-1}$ is the inverse
composition of $I$. Since the GJMS-operators are formally
self-adjoint (see \cite{GZ}), this fact implies that
$\CC_{2N}^{(1)}$ is formally self-adjoint, too.

The main result of the present section consists in the following
characterization of the polynomials $\pi_{2N}(\lambda)$.

\begin{thm}\label{factor} For any $N \ge 1$, the polynomial
$\pi_{2N}(\lambda)$ satisfies the $N$ identities
\begin{equation}\label{fact-a}
\pi_{2N}\left(-\f\!+\!2N\!-\!j\right) = (-1)^j P_{2j} \,
\pi_{2N-2j}\left(-\f\!+\!2N\!-\!j\right), \; j=1, \dots,N-1
\end{equation}
and
\begin{equation}\label{fact-b}
\pi_{2N}\left(-\f\!+\!N\right) = (-1)^{N-1} P_{2N}.
\end{equation}
\end{thm}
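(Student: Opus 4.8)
The plan is to reduce both families of identities to scalar identities among the multiplicities $m_I^{(k)}$ and then to evaluate a single closed-form polynomial. Writing $x=\lambda+\f-N$, the definition \eqref{pol} reads $\pi_{2N}(\lambda)=\sum_{|I|=N}\phi_I(x)\,P_{2I}$, where the coefficient attached to a composition $I$ of $N$ is
\begin{equation*}
\phi_I(x)=\sum_{k=1}^{N} m_I^{(k)}\,\frac{x^{N-k}}{(N-k)!},
\end{equation*}
with the convention $m_I^{(N)}=0$ for $I\ne(N)$ and $m_{(N)}^{(N)}=(-1)^{N-1}$ forced by \eqref{C2}. Since every operator on either side of \eqref{fact-a} and \eqref{fact-b} is a composition $P_{2I}$, it suffices to match the coefficient of each $P_{2I}$; no linear independence of the $P_{2I}$ is needed, as equality of all coefficients forces equality of the operators by distributivity.

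The crux is a closed formula for $\phi_I$: I claim that for every composition $I$ of $N$,
\begin{equation*}
\phi_I(x)=\frac{m_I}{(N-1)!}\prod_{\substack{0\le m\le N-1\\ m\ne N-I_1}}(x-m).
\end{equation*}
To obtain this, I would first note from \eqref{m-a} that $m_I^{(k)}/m_I$ depends on $I$ only through $b:=N-I_1$, and that $1/\big((N-1)\cdots(N-k+1)\big)=(N-k)!/(N-1)!$ cancels the factorial in \eqref{pol}. The engine is the Stirling identity
\begin{equation*}
\sum_i s(N,N-i)\,y^i=\prod_{m=1}^{N-1}(1-my)=:\Pi(y),
\end{equation*}
obtained by putting $x=1/y$ in \eqref{S-GF}. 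The numerator $\sum_i s(N,N-i)\,b^{k-1-i}$ of $m_I^{(k)}$ is then $[y^{k-1}]\,\Pi(y)/(1-by)$, and the sum telescopes:
\begin{equation*}
\phi_I(x)=\frac{m_I}{(N-1)!}\,x^{N-1}\,\frac{\Pi(1/x)}{1-b/x}=\frac{m_I}{(N-1)!}\,\frac{\prod_{m=0}^{N-1}(x-m)}{x-b}.
\end{equation*}
Because $b\in\{0,\dots,N-1\}$, the factor $x-b$ divides the numerator and the quotient is a genuine polynomial. I expect this telescoping to be the main obstacle: one must view the truncated double sum as a Cauchy product and check that its tail vanishes, which happens precisely because $\prod_{m=1}^{N-1}(b-m)=0$ when $1\le b\le N-1$ (for $b=0$, i.e.\ $I=(N)$, the series is already a polynomial of degree $N-1$).

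Granting the product formula, both assertions become elementary evaluations. For \eqref{fact-b}, set $x=0$: the product contains the factor $x-0$ unless $N-I_1=0$, i.e.\ unless $I=(N)$, so $\phi_I(0)=0$ for $I\ne(N)$ while $\phi_{(N)}(0)=(-1)^{N-1}$, giving $\pi_{2N}(-\f+N)=(-1)^{N-1}P_{2N}$. For \eqref{fact-a}, set $x=N-j$ with $1\le j\le N-1$: now the product contains the factor $x-(N-j)$ unless $I_1=j$, so $\phi_I(N-j)=0$ for every composition whose first part is not $j$. This is exactly why only operators $P_{2j}\circ P_{2I'}$ can survive, matching the right-hand side $(-1)^j P_{2j}\,\pi_{2N-2j}(-\f+2N-j)$, whose expansion involves precisely the compositions $(j,I')$ with $|I'|=N-j$.

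It then remains to verify the single surviving identity $\phi_{(j,I')}(N-j)=(-1)^j\phi_{I'}(N)$, where the argument $N$ arises because the shift variable of $\pi_{2N-2j}$ at $\lambda=-\f+2N-j$ equals $N$. Evaluating the two products gives $\phi_{(j,I')}(N-j)=\tfrac{m_{(j,I')}}{(N-1)!}(N-j)!\,(-1)^{j-1}(j-1)!$ and $\phi_{I'}(N)=\tfrac{m_{I'}\,N!}{(N-j-1)!\,j!\,(j+I'_1)}$, so the identity reduces to the multiplicity ratio obtained from \eqref{m-form} by prepending the part $j$, namely
\begin{equation*}
\frac{m_{(j,I')}}{m_{I'}}=-\frac{N!\,(N-1)!}{(N-j)!\,(N-j-1)!\,j!\,(j-1)!\,(j+I'_1)}.
\end{equation*}
Substituting this ratio makes the two sides agree, which completes the proof. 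The only genuinely nontrivial ingredient is the closed form of $\phi_I$; everything after it is bookkeeping with factorials.
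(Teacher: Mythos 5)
Your proof is correct, but it is organized differently from the paper's. The paper proves \eqref{fact-a} and \eqref{fact-b} by direct evaluation at each special point: it isolates the coefficient of each $P_{2l}P_{2J}$ in \eqref{pt-1}, uses the two-variable Stirling identity \eqref{L1} of Lemma \ref{L} to show that the cross terms with $l\ne j$ vanish, uses the limit version \eqref{L2} to evaluate the surviving coefficient, and then matches the result against the right-hand side via the ratio $m^{(1)}_{(j,r,K)}/m^{(1)}_{(r,K)}$. You instead first establish the closed product formula
$$
\phi_I(x)=\frac{m_I}{(N\!-\!1)!}\,\frac{b_N(x)}{x-(N\!-\!I_1)},\qquad x=\lambda+\f-N,
$$
which is exactly the content of Remark \ref{CF} (formula \eqref{closed}), whose proof the paper omits; the theorem then follows by evaluating products of integers. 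Your derivation of the closed formula is sound: the identification of $\sum_{i}s(N,N\!-\!i)\,b^{k-1-i}$ as the coefficient of $y^{k-1}$ in $\Pi(y)/(1-by)$ and the vanishing of the tail because $\prod_{m=1}^{N-1}(b-m)=0$ for $1\le b\le N-1$ are both correct, as is the extension of \eqref{m-a}--\eqref{m-b} to $k=N$ that is implicit in your conventions (the relevant numerator vanishes for $I\ne(N)$ and equals $s(N,1)$ for $I=(N)$, so the conventions forced by \eqref{C2} are consistent with the general formula). Your route buys a cleaner endgame --- the vanishing of the coefficients with $I_1\ne j$ is immediate from the factor $x-(N\!-\!j)$ in the product, so the two-variable identity \eqref{L1} is never needed --- and it proves the stronger statement \eqref{closed} along the way; the cost is the Cauchy-product/tail argument, which is the one step you rightly flag as the crux. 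The final bookkeeping (the values $(N\!-\!j)!\,(-1)^{j-1}(j\!-\!1)!$ and $N!/(j!\,(j+I'_1))$ of the two products, and the ratio $m_{(j,I')}/m_{I'}$ read off from \eqref{m-form}) checks out and agrees with \eqref{tp-3} and with the ratio used at the end of the paper's proof.
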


Since $\pi_{2N}(\lambda)$ has degree $N-1$, the factorizations
\eqref{fact-a} and \eqref{fact-b} uniquely determine this polynomial
in terms of the lower-order relatives $\pi_2, \dots, \pi_{2N-2}$ and
the GJMS-operator $P_{2N}$.

As a preparation of the proof of Theorem \ref{factor} we need the
following result.

\begin{lemm}\label{L} For all $N \ge 2$,
\begin{equation}\label{L1}
\sum_{2 \le a+b \le N} s(N,a\!+\!b) x^a y^b = \frac{yx(x\!-\!1)
\cdots (x\!-\!N\!+\!1) - xy(y\!-\!1) \cdots (y\!-\!N\!+\!1)}{x-y}
\end{equation}
for $x \ne y$. Moreover,
\begin{equation}\label{L2}
\sum_{2 \le a+b \le N} s(N,a\!+\!b) M^{a+b-1} = (-1)^{N-M-1} M!
(N\!-\!M\!-\!1)!
\end{equation}
for $M=0,1,\dots,N-1$. Here the sums run over all natural numbers
$a, b \ge 1$ subject to the condition $2 \le a+b \le N$.
\end{lemm}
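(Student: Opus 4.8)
The plan is to prove the rational-function identity \eqref{L1} by a direct expansion and then to deduce \eqref{L2} from it by passing to the diagonal $x=y$. For \eqref{L1} I would start from the definition \eqref{S-GF}, writing $b_N(x)=\sum_{k=1}^{N}s(N,k)x^{k}$ (the term $k=0$ is absent since $b_N(0)=0$ for $N\ge1$). A short computation gives
$$
y\,b_N(x)-x\,b_N(y)=\sum_{k=1}^{N}s(N,k)\bigl(yx^{k}-xy^{k}\bigr)=\sum_{k=2}^{N}s(N,k)\,xy\bigl(x^{k-1}-y^{k-1}\bigr),
$$
the $k=1$ contribution vanishing. For $k\ge2$ one has $x^{k-1}-y^{k-1}=(x-y)\sum_{i=0}^{k-2}x^{k-2-i}y^{i}$, so dividing by $x-y$ and substituting $a=k-1-i$, $b=i+1$ (so that $a,b\ge1$ and $a+b=k$, while $xy\cdot x^{k-2-i}y^{i}=x^{a}y^{b}$) turns the right-hand side into $\sum_{2\le a+b\le N}s(N,a\!+\!b)x^{a}y^{b}$, which is the left-hand side of \eqref{L1}. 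This step is purely algebraic and I expect no difficulty.

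For \eqref{L2} I would first collapse the double sum to a single variable. Grouping the $k-1$ pairs $(a,b)$ with $a+b=k$, the left-hand side of \eqref{L2} equals
$$
F(t):=\sum_{k=2}^{N}(k\!-\!1)\,s(N,k)\,t^{k-1}.
$$
Comparing this with $b_N'(t)=\sum_{k\ge1}k\,s(N,k)t^{k-1}$ and with $b_N(t)/t=\sum_{k\ge1}s(N,k)t^{k-1}=\prod_{j=1}^{N-1}(t-j)$ yields the polynomial identity $F(t)=b_N'(t)-b_N(t)/t$. This same relation (multiplied by $t$) is what \eqref{L1} produces in the confluent limit $x\to y$, via L'H\^opital's rule applied to its right-hand side, so \eqref{L2} may be read as the diagonal specialization of \eqref{L1}.

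It then remains to evaluate $F$ at the integers $M=0,1,\dots,N-1$. Since $b_N(M)=M(M-1)\cdots(M-N+1)$ vanishes for each such $M$, the term $b_N(M)/M$ drops out when $1\le M\le N-1$, leaving $F(M)=b_N'(M)$; the endpoint $M=0$, where one cannot divide by $M$, must be examined on its own. Differentiating $b_N(t)=\prod_{j=0}^{N-1}(t-j)$ by the product rule, every summand of $b_N'(M)$ still carries a factor $(M-M)=0$ except the one in which $(t-M)$ has been deleted, whence
$$
b_N'(M)=\prod_{\substack{0\le j\le N-1\\ j\ne M}}(M-j)=\Bigl(\prod_{j=0}^{M-1}(M-j)\Bigr)\Bigl(\prod_{j=M+1}^{N-1}(M-j)\Bigr)=(-1)^{N-M-1}\,M!\,(N\!-\!M\!-\!1)!,
$$
which is the claimed right-hand side. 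I expect the only delicate points to be this evaluation of the derivative of the falling factorial at one of its roots—where one must isolate the single surviving product term and reassemble the two ranges of factors into $M!$ and $(N\!-\!M\!-\!1)!$, tracking the sign $(-1)^{N-M-1}$ from the $N\!-\!1\!-\!M$ negative factors—together with the separate treatment of the boundary value $M=0$. Everything else is routine bookkeeping.
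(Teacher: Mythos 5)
Your proof of \eqref{L1} is correct and is, up to the direction of the computation, the same as the paper's: the paper starts from the double sum, groups it by $k=a+b$ into $\sum_{k=2}^{N} s(N,k)\bigl(y\tfrac{x^k-y^k}{x-y}-y^k\bigr)$ and recognizes $b_N$ via \eqref{S-GF}, while you start from $\tfrac{y\,b_N(x)-x\,b_N(y)}{x-y}$ and expand; the two manipulations are identical. For \eqref{L2} the paper simply says it follows from \eqref{L1} by letting $x\to y$ for $y=0,1,\dots,N-1$; your version makes this explicit by collapsing the sum to $F(t)=b_N'(t)-b_N(t)/t$ and evaluating $b_N'(M)$ as the single surviving product term, which is cleaner and correctly isolates where the argument actually lives. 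The one substantive point is the endpoint $M=0$, which you flag as needing separate treatment but never settle --- and in fact it cannot be settled, because \eqref{L2} is \emph{false} at $M=0$: every term of the left-hand side carries a factor $M^{a+b-1}$ with $a+b-1\ge 1$, so the left-hand side is $0$, whereas the right-hand side is $(-1)^{N-1}(N-1)!\neq 0$ (e.g.\ for $N=2$ one gets $0$ versus $-1$). Equivalently, the confluent limit of \eqref{L1} at $y=0$ only yields the trivial identity $0=0$, since one must divide by $M$ to pass from $\sum s(N,a+b)M^{a+b}$ to \eqref{L2}. This is a defect of the lemma's statement (and of the paper's one-line proof of \eqref{L2}), not of your argument: in the proof of Theorem \ref{factor} the identity \eqref{L2} is only ever invoked with $M=N-j$ for $1\le j\le N-1$, i.e.\ $M\ge 1$, so nothing downstream is affected. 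You should simply restrict \eqref{L2} to $M=1,\dots,N-1$ and note that the $M=0$ case is neither true nor needed.
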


\begin{proof} The sum in \eqref{L1} can be written in the form
\begin{equation}\label{pl1}
s(N,N) \sum_{a=1}^{N-1} x^a y^{N-a} + s(N,N-1) \sum_{a=1}^{N-2} x^a
y^{N-a-1} + \cdots + s(N,2) xy.
\end{equation}
Now \eqref{pl1} equals
\begin{multline*}
\sum_{k=2}^N s(N,k) \left( y \frac{x^k-y^k}{x-y} - y^k \right) \\
= \frac{y}{x-y} \left( \sum_{k=2}^N s(N,k) x^k - \sum_{k=2}^N s(N,k) y^k \right)
- \sum_{k=2}^N s(N,k) y^k.
\end{multline*}
In view of \eqref{S-GF}, the latter sum simplifies to
\begin{multline*}
\frac{y}{x-y} \left( x(x\!-\!1) \cdots (x\!-\!N\!+\!1)
- y (y\!-\!1)\cdots (y\!-\!N\!+\!1) - s(N,1) x + s(N,1)y \right) \\
- (y(y\!-\!1) \cdots (y\!-\!N\!+\!1) - s(N,1) y).
\end{multline*}
Now \eqref{L1} follows from here by a further simplification.
Finally, \eqref{L2} follows from \eqref{L1} by taking the limit $x
\to y$ for $y=0,1,\dots,N-1$.
\end{proof}

We continue with the

\begin{proof}[Proof of Theorem \ref{factor}] \eqref{fact-b} is
obvious by \eqref{C2}. The left-hand side of \eqref{fact-a} equals
\begin{equation}\label{pt-1}
\sum_{k=1}^{N-1} \CC_{2N}^{(k)} \frac{1}{(N\!-\!k)!} (N\!-\!j)^{N-k}
+ (-1)^{N-1} P_{2N}.
\end{equation}
We prove that all non-trivial contributions to this sum are
multiples of the operators
$$
P_{2j} P_{2J}, \; j+|J|=N.
$$
Moreover, we determine the corresponding weights. Let $J$ be
non-trivial. Eq.~\eqref{m-a} shows that the coefficient of $P_{2j}
P_{2J}$ in
\begin{equation}\label{term}
\sum_{k=1}^{N-1} \CC_{2N}^{(k)} \frac{1}{(N\!-\!k)!} (N\!-\!j)^{N-k}
\end{equation}
is given by
\begin{align*}
& \sum_{k=1}^{N-1} \left(\frac{\sum_{i=0}^{k-1} s(N,N-i)
(N\!-\!j)^{k-1-i}} {(N\!-\!1) \cdots (N\!-\!k\!+\!1)} \right)
\frac{(N\!-\!j)^{N-k}}{(N\!-\!k)!} \, m_{(j,J)}^{(1)} \\
& = \frac{1}{(N\!-\!1)!} \sum_{k=1}^{N-1}
\sum_{i=0}^{k-1} s(N,N\!-\!i) (N\!-\!j)^{N-1-i} \, m_{(j,J)}^{(1)} \\
& = \frac{1}{(N\!-\!1)!} \sum_{2 \le i+k \le N} s(N,i+k)
(N\!-\!j)^{i+k-1} \, m_{(j,J)}^{(1)}.
\end{align*}
Eq.~\eqref{L2} implies that the latter sum equals
\begin{equation}\label{tp-3}
(-1)^{j-1} \frac{(N-j)!(j-1)!}{(N\!-\!1)!} m_{(j,J)}^{(1)} =
(-1)^{j-1} \binom{N\!-\!1}{j\!-\!1}^{-1} m_{(j,J)}^{(1)}
\end{equation}
for $1 \le j \le N-1$.

Next, \eqref{S-GF} shows that $P_{2N}$ contributes to \eqref{pl1}
with the coefficient
\begin{multline*}
(-1)^{N-1} + \sum_{k=1}^{N-1} m_{(N)}^{(k)} \frac{1}{(N\!-\!k)!} (N\!-\!j)^{N-k} \\
= (-1)^{N-1} + \frac{1}{(N\!-\!1)!}  \sum_{k=2}^N s(N,k) (N\!-\!j)^{k-1}
=  (-1)^{N-1} - \frac{s(N,1)}{(N\!-\!1)!} =  0.
\end{multline*}
In the last step we have used \eqref{s-ex}.

Now let $l \ne j$. The coefficient of
$$
P_{2l}P_{2J}, \; l+|J|=N, \, 1 \le l \le N-1
$$
in \eqref{term} is given by
\begin{multline*}
\frac{1}{(N-1)!} \sum_{k=1}^{N-1} \left( \sum_{i=0}^{k-1}
s(N,N-i) (N-l)^{k-1-i} \right) (N-j)^{N-k} m_{(l,J)}^{(1)} \\
= \frac{1}{(N\!-\!1)!} \left( \sum_{2 \le i+k \le N} s(N,i+k)
(N-l)^{i-1} (N-j)^k \right) m_{(l,J)}^{(1)}.
\end{multline*}
Lemma \ref{L} implies that this sum vanishes.

Finally, we prove that the weight of $P_{2j}P_{2J}$ on the left-hand
side of \eqref{fact-a} coincides with its weight on the right-hand
side. For this we write $J = (r,K)$ (with a possibly trivial $K$).
For non-trivial $K$, we have $1 \le r < N-j$. In this case,
\eqref{tp-3} shows that the assertion is equivalent to
\begin{multline}\label{pt-3}
\sum_{k=1}^{N-j-1} \left( \frac{ \sum_{i=0}^{k-1}
s(N-j,N\!-\!j\!-\!i) (N\!-\!j\!-\!r)^{k-1-i}}{(N\!-\!j\!-\!1)
\cdots (N\!-\!j\!-\!k\!+\!1)}\right) \frac{N^{N-j-k}}{(N\!-\!j\!-\!k)!} \\
= - m_{(j,r,K)}^{(1)}/m_{(r,K)}^{(1)} \binom{N\!-\!1}{j\!-\!1}^{-1}.
\end{multline}
Using the abbreviation $M=N-j$, the left-hand side of \eqref{pt-3} equals
\begin{multline*}
\frac{1}{(M\!-\!1)!} \sum_{k=1}^{M-1}
\sum_{i=0}^{k-1} s(M,M-i) (M-r)^{k-1-i} N^{M-k} \\[-3mm]
= \frac{1}{(M\!-\!1)!} \sum_{a+b \le M} s(M,a+b) (M-r)^{a-1} N^b.
\end{multline*}
We apply Lemma \ref{L} to simplify this sum. We find
\begin{multline*}
-\frac{1}{(N\!-\!j\!-\!1)!} \left( \frac{N (M\!-\!r) \cdots (-r\!+\!1)
- (M\!-\!r) N (N\!-\!1) \cdots (N\!-\!M\!+\!1)}{(N\!-\!j\!-\!r)(j\!+\!r)}\right) \\
= \frac{1}{j+r} \frac{N!}{j!(N\!-\!j\!-\!1)!}.
\end{multline*}
The assertion \eqref{pt-3} follows by combining this result with
$$
m^{(1)}_{(j,r,K)}/m^{(1)}_{(r,K)} = - \frac{1}{j+r} \binom{N}{j}^2
\frac{j(N\!-\!j)}{N}.
$$
For trivial $K$, i.e., $J=(r)$ and $r=N-j$, \eqref{tp-3} shows that
the assertion is equivalent to
\begin{equation}\label{last}
\frac{1}{(r\!-\!1)!} \sum_{k=1}^{r-1} s(r,r\!-\!k\!+\!1) N^{r-k+1} +
(-1)^{r-1} = - m_{(j,r)}^{(1)}/m_{(r)}^{(1)}
\binom{N\!-\!1}{j\!-\!1}^{-1}.
\end{equation}
Here, the term $(-1)^{r-1}$ on the left-hand side comes from the
contribution of $\CC_{2r}^{(r)}$. By \eqref{S-GF} and \eqref{s-ex},
the left-hand side equals
$$
\frac{1}{(r\!-\!1)!N} \left[N (N\!-\!1) \cdots (N\!-\!r\!+\!1) +
(-1)^r (r\!-\!1)! N \right] + (-1)^{r-1} = \frac{(N\!-\!1)!}{j!
(N\!-\!j\!-\!1)!}.
$$
On the other hand,
$$
m^{(1)}_{(j,r)}/m^{(1)}_{(r)} = - \frac{1}{N} \binom{N}{j}^2
\frac{j(N\!-\!j)}{N}.
$$
This yields \eqref{last}. The proof is complete.
\end{proof}

\begin{rem}\label{CF} Similar arguments can be used to prove the closed
formula
\begin{equation}\label{closed}
\pi_{2N}(\lambda) = \frac{1}{(N\!-\!1)!} \sum_{|I|=N}
\frac{b_N(\lambda\!+\!\f\!-\!N)}{\lambda\!+\!\f\!-\!2N\!+\!I_l} m_I
P_{2I}
\end{equation}
(see \eqref{S-GF}). Here $I_l$ denotes the most left entry of the
composition $I$. Note that the coefficients in \eqref{closed} are
polynomials of degree $N\!-\!1$ since
$$
\lambda\!+\!\f\!-\!2N\!+\!I_l=\left(\lambda\!+\!\f\!-\!N\right)
\!-\!(N\!-\!I_l)
$$
and the integers $N\!-\!I_l$ are zeros of $b_N(x)$. We omit the
details.
\end{rem}

\section{The recursive structure of $Q$-curvature}\label{curv}

In the present section we prove Theorem \ref{Q-GF}.

The proof utilizes properties of $Q$-curvature polynomials. The
notion of $Q$-curvature polynomials was introduced in
\cite{juhl-book} (see also \cite{BJ}). We briefly recall this
concept. Assume that $n$ is even. Associated to any Riemannian
manifold $(M,g)$ of dimension $n$, there is a finite sequence
$Q_2^{res}(g;\lambda), Q_4^{res}(g;\lambda), \dots,
Q_n^{res}(g;\lambda)$ of polynomials of respective degrees
$1,2,\dots,\f$. These polynomials are defined by the constant terms
\begin{equation}\label{Q-pol}
Q_{2N}^{res}(g;\lambda) = -(-1)^N D_{2N}^{res}(g;\lambda)(1)
\end{equation}
of the so-called residue families
$$
D_{2N}^{res}(g;\lambda): C^\infty([0,\varepsilon) \times M) \to
C^\infty(M). \footnote{The name comes from their relation to a
certain residue construction (see \cite{juhl-book}).}
$$
These are families of local operators which are defined in terms of
the holographic coefficients $v_{2j}$ and the coefficients
$\T_{2j}(\lambda)(f)$ in the asymptotic expansion
$$
u \sim \sum_{j \ge 0} r^{\lambda+2j} \T_{2j}(\lambda)(f), \;
\T_0(\lambda)(f) = f, \; r \to 0
$$
of eigenfunctions
$$
-\Delta_{g_+} u = \lambda(n-\lambda) u.
$$
of the Laplace-Beltrami operator for the Poincar\'e-Einstein metric
$g_+$ corresponding to $g$. The coefficients $\T_{2j}(g;\lambda)$
are meromorphic families (in $\lambda$) of differential operators on
$M$. The residue families are conformally covariant generalizations
of the GJMS-operators in the following sense. For any GJMS-operators
$P_{2N}$, the family $D_{2N}^{res}(\lambda)$ contains $P_{2N}$ in
the sense that
\begin{equation}\label{special}
D_{2N}^{res}\left(g;-\f\!+\!N\right) = P_{2N}(g) i^*,
\end{equation}
where $i: M \hookrightarrow [0,\varepsilon) \times M$ denotes the
embedding $m \mapsto (0,m)$. Moreover, $D_{2N}^{res}(g;\lambda)$ is
conformally covariant in the sense that it satisfies the
transformation law
$$
e^{-(\lambda-2N) \varphi} D_{2N}^{res}(e^{2\varphi}g;\lambda) =
D_{2N}^{res}(g;\lambda) \circ \kappa_* \circ
\left(\frac{\kappa^*(r)}{r}\right)^\lambda
$$
for all $\varphi \in C^\infty(M)$. Here, $\kappa$ denotes the
diffeomorphism which relates the Poincar\'e-Einstein metrics of $g$
and $\hat{g} = e^{2\varphi}g$, i.e.,
\begin{equation*}
\kappa^* \left(r^{-2}(dr^2 + g_r)\right) = r^{-2}(dr^2 + \hat{g}_r)
\end{equation*}
and $\kappa$ restricts to the identity on $r=0$.

In terms of the families $\T_{2N}(\lambda)$ and the holographic
coefficients, the $Q$-curvature polynomials are defined by
\begin{multline*}\label{Q-pol-gen}
Q_{2N}^{res}(h;\lambda) = -2^{2N} N!
\left(\left(\lambda\!+\!\f\!-\!2N\!+\!1\right) \cdots
\left(\lambda\!+\!\f\!-\!N\right)\right) \\ \times
\left[\T_{2N}^*(h;\lambda\!+\!n\!-\!2N)(v_0) + \dots +
\T_0^*(h;\lambda\!+\!n\!-\!2N)(v_{2N})\right].
\end{multline*}
Here, the overall polynomial factor has the effect to remove poles.

An additional important feature of residue families is that they
satisfy a system of factorization identities which generalize
\eqref{special}. In fact, we have
\begin{equation}\label{special-all}
D_{2N}^{res}\left(g;-\f\!+\!2N\!-\!j\right) = P_{2j}(g)
D_{2N-2j}^{res}\left(g;-\f\!+\!2N\!-\!j\right), \; j=1,\dots,N.
\end{equation}
Here, \eqref{special} is contained as the special case $j=N$; note
that $D_0^{res}(g;\lambda) = i^*$. Now \eqref{special-all} implies
that
\begin{equation}\label{factorization}
Q_{2N}^{res}\left(-\f\!+\!2N\!-\!j\right) = (-1)^j P_{2j}
Q_{2N-2j}^{res}\left(-\f\!+\!2N\!-\!j\right), \; j=1, \dots,N.
\end{equation}
Note that $Q_0^{res}(\lambda)=-1$.

For full details on residue families and $Q$-curvature polynomials
see \cite{juhl-book} and \cite{BJ}.

We continue with the

\begin{proof}[Proof of Theorem \ref{Q-GF}] The
assertion is equivalent to
\begin{equation}\label{reduced}
\sum_{a+|J|=N} m_{(J,a)} (-1)^a P_{2J}(Q_{2a}) = 2^{2N} N!(N\!-\!1)!
w_{2N}, \; N \ge 1.
\end{equation}
We prove \eqref{reduced} by comparing two different evaluations of
the leading coefficient of the $Q$-curvature polynomial
$Q_{2N}^{res}(\lambda)$. First, assume that $n$ is odd. On the one
hand, Proposition 4.2 in \cite{Q8} shows that the coefficient of
$\lambda^N$ is
\begin{equation}\label{eval-1}
-2^{2N} N! w_{2N}.
\end{equation}
On the other hand, the degree $N$ polynomial $Q_{2N}^{res}(\lambda)$
satisfies the identities
\begin{equation}\label{fact-c}
Q_{2N}^{res}\left(-\f\!+\!2N\!-\!j\right) = (-1)^j P_{2j}
Q_{2N-2j}^{res}\left(-\f\!+\!2N\!-\!j\right), \; j=1, \dots,N\!-\!1
\end{equation}
and
\begin{equation}\label{fact-d}
Q_{2N}^{res}\left(-\f\!+\!N\right) = - \left(\f\!-\!N\right) Q_{2N}
\end{equation}
(see \eqref{factorization}). Moreover, an analog of Theorem 1.6.6 in
\cite{BJ} for odd $n$ states the vanishing result\footnote{For odd
$n$, the proof even simplifies since the families $\T_{2N}(\lambda)$
are regular at $n\!-\!2N$.}
$$
Q_{2N}^{res}(0)=0.
$$
These results show that \eqref{fact-c} and \eqref{fact-d} are
equivalent to the identities
\begin{equation}\label{fact-c2}
\Q_{2N}^{res}\left(-\f\!+\!2N\!-\!j\right) = (-1)^j P_{2j}
\Q_{2N-2j}^{res}\left(-\f\!+\!2N\!-\!j\right), \; j=1, \dots,N-1
\end{equation}
and
\begin{equation}\label{fact-d2}
\Q_{2N}^{res}\left(-\f\!+\!N\right) = Q_{2N}
\end{equation}
for the polynomials
\begin{equation}\label{Q-reduced}
\Q_{2N}^{res}(\lambda) = \lambda^{-1} Q_{2N}^{res}(\lambda).
\end{equation}
By comparing the relations \eqref{fact-c2} and \eqref{fact-d2} with
\eqref{fact-a} and \eqref{fact-b}, Theorem \ref{factor} implies that
the leading coefficient of $Q_{2N}^{res}(\lambda)$ equals
\begin{equation}\label{eval-2}
- \frac{1}{(N\!-\!1)!} \sum_{|J|+a=N} m_{(J,a)} (-1)^a
P_{2J}(Q_{2a}).
\end{equation}
Now the equality of \eqref{eval-1} and \eqref{eval-2} is equivalent
to the asserted identity \eqref{reduced}. Next, assume that $n$ is
even. Then, under the additional assumption $n \ge 4N$, i.e.,
$-\f+2N \le 0$, the sets
$$
\left\{ -\f\!+\!2N\!-\!j \,|\, j=1,\dots,N \right\} \quad \mbox{and}
\quad \{0\}
$$
are disjoint, and the assertion follows by the same arguments as
above. Thus, for fixed $N$, we have proved \eqref{reduced} in all
dimensions $n \ge 4N$. Now we recall that all quantities in
\eqref{reduced} are given by universal expressions in terms of the
metric, its inverse, the curvature and covariant derivatives thereof
with coefficients that are rational functions in $n$ which are
regular for $n \ge 2N$. As a consequence, the relation
\eqref{reduced} holds true also in the remaining cases $4N > n \ge
2N$ (for even $n$).
\end{proof}

Theorem \ref{Q-GF} is equivalent to
\begin{equation}\label{Q-G2}
\G^2\left(\frac{r^2}{4}\right) = v(r).
\end{equation}
This formulation naturally expresses the contributions of
lower-order holographic coefficients $v_{2j}$ ($2j < 2N$) on the
right-hand side of \eqref{Q-G} in terms of lower-order
GJMS-operators acting on lower-order $Q$-curvatures. In fact,
comparing coefficients in \eqref{Q-G2} yields the relations
$$
2 \Lambda_{2N} + \sum_{j=1}^{N-1} \frac{j(N\!-\!j)}{N}
\binom{N}{j}^2 \Lambda_{2j} \Lambda_{2N-2j} = 2^{2N} N! (N\!-\!1)!
v_{2N},
$$
where
$$
\Lambda_{2M} \st \sum_{|a+|J|=M} m_{(J,a)} (-1)^a P_{2J}(Q_{2a}).
$$
In particular, we find
$$
(Q_4 + P_2(Q_2)) + Q_2^2 = 2! 2^3 v_4
$$
and
$$
(Q_6 + 2 P_2(Q_4) - 2 P_4(Q_2) + 3 P_2^2(Q_2)) + 6(Q_4 + P_2(Q_2))
Q_2 = 2!3!2^5 v_6.
$$

\section{A summation formula on round spheres}\label{sum}

We recall that
\begin{equation}\label{vary-Q}
(d/dt)|_0(e^{2N t \varphi} Q_{2N}(e^{2t\varphi}g)) = (-1)^N
P_{2N}^0(g)(\varphi),
\end{equation}
where $P_{2N}^0$ denotes the non-constant part of $P_{2N}$, i.e.,
$P_{2N}^0 = P_{2N} - P_{2N}(1)$. For the proof of \eqref{vary-Q}, we
differentiate the identity
$$
e^{(\f+N)t\varphi} P_{2N}(e^{2t\varphi}g) (e^{-(\f-N)t\varphi}) =
P_{2N}(g)(1)
$$
(see \eqref{covar}) at $t=0$. Using the decomposition
$$
P_{2N} = P_{2N}^0 + (-1)^N \left(\f\!-\!N\right) Q_{2N},
$$
we find
$$
-\left(\f\!-\!N\right) P_{2N}^0(g)(\varphi) + (-1)^N \left(\f\!-\!N
\right) (d/dt)|_0 (e^{2N t \varphi} Q_{2N}(e^{2t\varphi}g)) = 0.
$$
If $2N \ne n$, it suffices to divide this equation by $\f-N$. In the
critical case $2N=n$, \eqref{vary-Q} follows from the fundamental
identity
$$
e^{n\varphi} Q_n(e^{2\varphi}g) = Q_n(g) + (-1)^\f P_n(g)(\varphi).
$$

Now combining Theorem \ref{Q-GF} with \eqref{vary-Q} implies a
formula for the non-constant part of the self-adjoint operator
\begin{equation}
\M_{2N} \st \CC_{2N}^{(1)} = \sum_{|I|=N} m_I P_{2I}.
\end{equation}
Conjecture 11.1 in \cite{juhl-power} states that the operator
$\M_{2N}$ actually can be identified with a certain {\em
second-order} operator. In particular, a huge cancellation takes
place. Since the sum defining $\M_{2N}$ contains the term $P_{2N}$
(we recall that $m_{(N)} = 1$), this relation can be seen as a
recursive formula which expresses $P_{2N}$ in terms of lower-order
GJMS-operators (and some additional terms). The following summation
formula on round sphere is an important special case.

\begin{thm}[\cite{juhl-power}]\label{sum-1} On the round sphere $\S^n$,
\begin{equation}\label{summation-1}
\sum_{|I|=N} m_I P_{2I} = N! (N\!-\!1)! P_2, \; N \ge 1.
\end{equation}
\end{thm}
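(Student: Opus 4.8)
The plan is to turn the operator identity into a one-variable polynomial identity, using the fact that on $\S^n$ every GJMS-operator is a polynomial in the Yamabe operator $P_2$.

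The first step is the classical spherical factorization. On $\S^n$ the scalar curvature is constant, $P_2=\Delta-(\f\!-\!1)\f$ acts on the spherical harmonics of degree $k$ by the scalar $-\left(k\!+\!\f\!-\!\tfrac12\right)^2+\tfrac14$, and a short computation gives, for every $M\ge 1$,
\[
P_{2M}=\prod_{j=1}^{M}\bigl(P_2+j(j\!-\!1)\bigr)=\prod_{j=1}^{M}\Bigl(\Delta-\tfrac{(n+2j-2)(n-2j)}{4}\Bigr).
\]
In particular all $P_{2M}$ commute, and for a composition $I=(I_1,\dots,I_r)$ one has $P_{2I}=\prod_{s=1}^{r}q_{I_s}(P_2)$ with $q_M(t)=\prod_{j=1}^{M}\bigl(t+j(j\!-\!1)\bigr)$. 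Hence $\M_{2N}=\sum_{|I|=N}m_I P_{2I}=M_N(P_2)$ where $M_N(t):=\sum_{|I|=N}m_I\prod_{s}q_{I_s}(t)$ is a polynomial of degree $N$. Since the $P_2$-eigenvalues on $\S^n$ take infinitely many values and $q_1(t)=t$, the assertion \eqref{summation-1} is \emph{equivalent} to the purely algebraic identity
\[
M_N(t)=N!\,(N\!-\!1)!\,t,\qquad N\ge 1,
\]
which I shall call $(\star)$. I would first check $(\star)$ by hand for $N\le 3$: the potential degree-$N$ contributions cancel and leave $N!(N\!-\!1)!\,t$, which fixes signs and normalizations.

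For the proof of $(\star)$ my main route reuses Theorem \ref{factor}. On $\S^n$ the operator polynomial $\pi_{2N}(\lambda)$ becomes a scalar polynomial of degree $N-1$ in $\lambda$, with coefficients polynomial in $t=P_2$, whose leading coefficient is precisely $M_N(t)/(N\!-\!1)!$. By Theorem \ref{factor} it is determined by its values at the $N$ consecutive nodes $\lambda+\f=N,N+1,\dots,2N-1$, and on $\S^n$ the right-hand sides of \eqref{fact-a}--\eqref{fact-b} are explicit (each $P_{2j}$ becomes $q_j(t)$). Writing the leading coefficient as the top divided difference over these nodes,
\[
\frac{M_N(t)}{(N\!-\!1)!}=\frac{q_N(t)}{(N\!-\!1)!}-\sum_{j=1}^{N-1}\frac{q_j(t)\,\pi_{2N-2j}\bigl(-\f\!+\!2N\!-\!j\bigr)}{(j\!-\!1)!\,(N\!-\!j)!},
\]
one is reduced to evaluating the lower polynomials $\pi_{2N-2j}$ at the indicated points. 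I would carry this out by proposing the closed form of $\pi_{2N}(\lambda)$ on $\S^n$ suggested by the cases $N\le 3$, verifying that it satisfies the characterizing identities of Theorem \ref{factor} (which pin it down uniquely), and reading off that its leading coefficient equals $N!\,t$; this gives $(\star)$, i.e.\ $\M_{2N}=N!(N\!-\!1)!\,P_2$.

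A second, more computational route proceeds by direct summation, parallel to \cite{juhl-power}. After dividing out $N!(N\!-\!1)!$, the identity $(\star)$ reads $\sum_{r\ge1}(-1)^r\sum_{I_1+\cdots+I_r=N}\prod_{s}\frac{q_{I_s}(t)}{I_s!(I_s-1)!}\prod_{j}\frac{1}{I_j+I_{j+1}}=-t$ for every $N$. Linearizing the coupling by $\frac{1}{I_j+I_{j+1}}=\int_0^1 u^{I_j+I_{j+1}-1}\,du$ decouples the sum over each part and turns the composition sum into a transfer/integral equation governed by the single-part kernel $\psi(w)=\sum_{M\ge1}\frac{q_M(t)}{M!(M-1)!}w^M$. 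One verifies that $\psi$ solves the hypergeometric equation $w(1\!-\!w)\psi''-2w\psi'-t\psi=0$; solving the resulting integral equation in closed form and evaluating reproduces $(\star)$.

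The reduction to $(\star)$ is routine; the real content, and the main obstacle, is the coupling factor $\prod_j 1/(I_j+I_{j+1})$, which prevents the composition sum from factorizing over its parts. In the first route this difficulty is concentrated in determining the lower $\pi_{2N-2j}$ at off-node points (equivalently, in guessing and verifying the closed form of $\pi_{2N}$ on $\S^n$); in the second it surfaces as the hypergeometric integral equation one must solve explicitly. I expect the cleanest write-up to follow the first route, since Theorem \ref{factor} already performs the combinatorial bookkeeping and leaves only a finite induction on $N$.
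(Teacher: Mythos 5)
Your reduction of \eqref{summation-1} to a one-variable polynomial identity is correct and is the setting in which the cited proof also operates: on $\S^n$ one has $P_{2M}=\prod_{j=1}^{M}(P_2+j(j\!-\!1))$ (the paper itself uses $P_4=P_2(P_2+2)$ in the proof of Theorem \ref{sum-2}), all GJMS-operators commute, and since $P_2$ takes infinitely many distinct eigenvalues the claim becomes the polynomial identity $M_N(t)=N!\,(N\!-\!1)!\,t$. Your divided-difference formula for the leading coefficient of $\pi_{2N}(\lambda)$ extracted from Theorem \ref{factor} is also correct. The genuine gap is that neither of your two routes actually proves $(\star)$. In the first route the induction does not close as set up: the inductive hypothesis $(\star)$ for $M<N$ controls only the \emph{leading} coefficients of the lower polynomials $\pi_{2M}(\lambda)$, whereas your divided-difference formula requires their values $\pi_{2N-2j}\bigl(-\f\!+\!2N\!-\!j\bigr)$ at off-node points, i.e.\ the full polynomials on $\S^n$. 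You acknowledge this and propose to guess and verify a closed form, but no candidate is written down, and producing and verifying that closed form is the entire content of the proof. In the second route the hypergeometric ODE for $\psi$ is correct, but ``solving the resulting integral equation in closed form and evaluating'' is precisely the nontrivial step, and it is not carried out.

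For comparison: the present paper does not reprove Theorem \ref{sum-1} (it is imported from \cite{juhl-power}), but the mechanism is visible in the proof of Theorem \ref{sum-2}. There one reorganizes $\sum_{|I|=N}m_I P_{2I}$ on $\S^n$ into a sum $\sum_{s}c_s\,P_{2(N-s)}$ with explicit coefficients containing inner alternating binomial sums of the type $\sum_{a}(-1)^a\binom{N-s-1}{a-1}$; these vanish for all $s$ except $s=N\!-\!1$, and the surviving term is exactly $N!\,(N\!-\!1)!\,P_2$. This is closer in spirit to your second route but avoids the integral representation entirely by exploiting the explicit product structure of the multiplicities $m_I$. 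If you want to complete your write-up, following that direct summation is the most economical fix; otherwise you must supply and verify the closed form of $\pi_{2N}(\lambda)$ on $\S^n$ that your first route presupposes.
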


For the proof of an analog of Theorem \ref{sum-1} on the conformally
flat pseudo-spheres we refer to \cite{JK}.\footnote{A full proof of
Conjecture 11.1 will appear in \cite{juhl-explicit}.}

The following result provides an analogous summation formula in
which the numbers $m_I = m_I^{(1)}$ are replaced by $m_I^{(2)}$.

\begin{thm}\label{sum-2} On the round sphere $\S^n$,
\begin{equation}\label{summation-2}
\sum_{|I|=N} m_I^{(2)} P_{2I} = -\frac{N!(N\!-\!1)!}{2!} (P_2^2 + N P_2), \; N \ge 1.
\end{equation}
\end{thm}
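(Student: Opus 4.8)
The plan is to diagonalize both sides on $\S^n$, subtract off the part already supplied by Theorem \ref{sum-1}, and reduce \eqref{summation-2} to a single weighted summation formula. On the round sphere the GJMS-operators are polynomials in the Laplacian,
\[
P_{2k} = \prod_{j=1}^{k}\left(\Delta - \left(\f\!+\!j\!-\!1\right)\left(\f\!-\!j\right)\right),
\]
so they commute and are simultaneously diagonalized by the spaces of spherical harmonics. On harmonics of degree $\ell$, writing $z=\ell+\f$, each $P_{2k}$ acts by the scalar $\lambda_k(z)=(-1)^k\prod_{j=1}^{k}(z\!+\!j\!-\!1)(z\!-\!j)$; in particular $P_2$ acts by $\lambda_1(z)=-z(z\!-\!1)$ and $P_{2I}$ acts by $\prod_s\lambda_{I_s}(z)$. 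Both sides of \eqref{summation-2} thus act on degree-$\ell$ harmonics through polynomials in $z$, and since $z$ ranges over the infinite set $\{\ell+\f:\ell\ge0\}$, it suffices to prove the polynomial identity $\sum_{|I|=N} m_I^{(2)}\prod_s\lambda_{I_s}(z) = -\tfrac{N!(N-1)!}{2}\bigl(\lambda_1(z)^2 + N\lambda_1(z)\bigr)$.

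Next I would insert \eqref{m-ex}, namely $m_I^{(2)}=\bigl(\tfrac{N-I_1}{N-1}-\tfrac N2\bigr)m_I$ with $I_1$ the first part of $I$, and split the left-hand sum accordingly. Theorem \ref{sum-1}, in its scalar form $\sum_{|I|=N}m_I\prod_s\lambda_{I_s}(z)=N!(N\!-\!1)!\,\lambda_1(z)$, evaluates the $-\tfrac N2$-piece, and the resulting multiple of $N\lambda_1(z)$ cancels exactly against the $N\lambda_1(z)$-term on the right. Hence \eqref{summation-2} is equivalent to the single weighted summation formula
\[
\sum_{|I|=N} (N\!-\!I_1)\,m_I\,P_{2I} = -\frac{(N\!-\!1)\,N!(N\!-\!1)!}{2}\,P_2^2\quad\text{on }\S^n,
\]
equivalently $\sum_{|I|=N}(N\!-\!I_1)m_I\prod_s\lambda_{I_s}(z)=-\tfrac{(N-1)N!(N-1)!}{2}\lambda_1(z)^2$ (the cases $N\le 3$ being immediate by hand).

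The crux is this weighted identity, which genuinely goes beyond Theorem \ref{sum-1} because of the asymmetric weight $N-I_1=\sum_{s\ge2}I_s$. I see two routes. The first mirrors the summation technique of \cite{juhl-power}: write $m_I$ with its nearest-neighbour factors $\prod_j(I_j\!+\!I_{j+1})^{-1}$, decouple them via $(I_j\!+\!I_{j+1})^{-1}=\int_0^1 t^{I_j+I_{j+1}-1}\,dt$, introduce a marking variable $x^{I_1}$ for the first block, and sum the resulting transfer-type integral over all compositions; the value at $x=1$ reproduces Theorem \ref{sum-1}, while differentiation at $x=1$ supplies the weight $N-I_1$. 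The second route uses Theorem \ref{factor}: by \eqref{pol} the left-hand side of \eqref{summation-2} is exactly $\CC_{2N}^{(2)}$, i.e. $(N\!-\!2)!$ times the coefficient of $(\lambda\!+\!\f\!-\!N)^{N-2}$ in $\pi_{2N}(\lambda)$, so on $\S^n$ one determines the two leading coefficients of the scalar polynomial $\pi_{2N}(\lambda)$ by interpolation from the $N$ factorization values of Theorem \ref{factor}; the leading coefficient reproduces $\CC_{2N}^{(1)}=N!(N\!-\!1)!P_2$ and the subleading one yields the claim. The main obstacle, in either route, is precisely this first-block dependence of the weight (equivalently, the nearest-neighbour factors in $m_I$ that obstruct a naive factorization of the composition sum): closing the generating-function computation, or in the second route controlling the extrapolated values $\pi_{2i}(-\f\!+\!2N\!-\!j)$ of the lower polynomials at the shifted spectral points, is where the real work lies, while the reduction above is purely formal.
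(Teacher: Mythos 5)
Your formal reduction is correct and coincides with the one the paper performs: inserting \eqref{m-ex}, evaluating the $-N/2$ piece by Theorem \ref{sum-1}, and cancelling the $NP_2$ terms reduces \eqref{summation-2} exactly to the weighted identity
\[
\sum_{a+|J|=N} a\, m_{(a,J)}\, P_{2a}P_{2J} \;=\; N!(N\!-\!1)!\left(\tfrac{N-1}{2}P_2^2+NP_2\right)
\quad\text{on }\S^n,
\]
which is equivalent to your $\sum_{|I|=N}(N\!-\!I_1)m_IP_{2I}=-\tfrac{(N-1)N!(N-1)!}{2}P_2^2$. The gap is that you stop precisely at this point: you correctly identify the weighted sum as the crux, sketch two possible routes, and then explicitly defer ``the real work''. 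Neither route is executed, so what you have is a reduction plus a research plan, not a proof. Your second route is moreover not obviously workable as stated: Theorem \ref{factor} determines $\pi_{2N}(\lambda)$ from the values $P_{2j}\pi_{2N-2j}(-\f+2N-j)$, so extracting the subleading coefficient $\CC_{2N}^{(2)}$ by interpolation requires closed formulas on $\S^n$ for \emph{all} coefficients $\CC_{2M}^{(k)}$ of the lower polynomials (not just $k=1,2$), i.e.\ a much stronger induction hypothesis than the statement being proved.

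The paper closes the gap along your first route, by reusing the explicit computation behind Theorem \ref{sum-1} in \cite{juhl-power}: the same manipulations, with the extra factor $a$ carried along, give
\[
\sum_{a+|J|=N} a\,m_{(a,J)}\,P_{2a}P_{2J}
=\sum_{s=0}^{N-1}(-1)^{N+s}P_{2(N-s)}\,\frac{N!(N\!-\!1)!^2}{(N\!-\!s)!\,s!\,(N\!-\!s\!-\!1)!^2}
\sum_{a=1}^{N}(-1)^a a\binom{N\!-\!s\!-\!1}{a\!-\!1},
\]
and the elementary identity $\sum_{a\ge 0}(-1)^a(a+1)\binom{n}{a}$, which vanishes for $n\ge 2$ and equals $-1,1$ for $n=1,0$, annihilates every term except $s=N\!-\!1$ and $s=N\!-\!2$; these contribute $N!(N\!-\!1)!\,P_2+\tfrac{N-1}{2}N!(N\!-\!1)!\,P_4$, and $P_4=P_2(P_2+2)$ on $\S^n$ converts this to the required polynomial in $P_2$. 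Whether you phrase it spectrally (your $z$-variable setup is fine and would absorb the $P_4=P_2(P_2+2)$ step automatically) or operator-theoretically, you still must prove this two-term collapse of the weighted composition sum; that is the content missing from your proposal.
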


In particular, Theorem \ref{sum-2} shows that the operator of order
$2N$ on the left-hand side is an operator of order four. We expect
that for general metrics the operator on the left-hand side of
\eqref{summation-2} is an operator of order four, too.

\begin{proof}[Proof of Theorem \ref{sum-2}] The arguments in the proof
of Theorem \ref{sum-1} in \cite{juhl-power} show that
\begin{multline*}
\sum_{a+|J|=N} m_{(a,J)}^{(1)} a P_{2a} P_{2J} \\[-3mm]
= \sum_{s=0}^{N-1} (-1)^{N+s} P_{2(N-s)} \frac{N!(N\!-\!1)!^2}
{(N\!-\!s)!s!(N\!-\!s\!-\!1)!^2} \sum_{a=1}^{N} (-1)^a a
\binom{N\!-\!s\!-\!1}{a\!-\!1};
\end{multline*}
note that the right-hand side of this formula differs from the last
formula in the proof of Lemma 6.3 in \cite{juhl-power} only by the
additional coefficient $a$ in the sum over $a$. Now the summation
formula
$$
\sum_{a \ge 0} (-1)^a (a+1) \binom{n}{a} = \begin{cases} 0 & n \ge 2
\\ -1 & n = 1 \\
1 & n = 0
\end{cases}
$$
implies that, in the above sum, the sum over $a$ vanishes except for
$s=N-1$ and $s=N-2$. These two contributions yield
$$
N! (N\!-\!1)! P_2 + \frac{N\!-\!1}{2} N! (N\!-\!1)! P_4 = N!
(N\!-\!1)! \left( \frac{N\!-\!1}{2} P_2^2 + N P_2 \right)
$$
by using $P_4 = P_2 (P_2+2)$. Now Theorem \ref{sum-1} and
$$
m_{(a,J)}^{(2)} = \left( \frac{N\!-\!a}{N\!-\!1} -
\frac{N}{2}\right) m_{(a,J)}^{(1)}
$$
(see \eqref{m-ex}) show that
$$
\sum_{a+|J|=N} m_{(a,J)}^{(2)} P_{2a} P_{2J}
$$
equals
$$
N!(N\!-\!1)! \left( \left( \frac{N}{N\!-\!1} - \frac{N}{2}\right)
P_2 - \frac{1}{N\!-\!1} \left(\frac{N\!-\!1}{2} P_2^2 + NP_2 \right)
\right).
$$
Simplification yields the assertion.
\end{proof}

\begin{rem} Similar arguments prove the summation formulas
$$
\sum_{|I|=N} m_I^{(3)} P_{2I} = \frac{N!(N\!-\!1)!}{3!2!} (P_2^3 +
(3N\!-\!1)P_2^2 + N(3N\!-\!1)/2 P_2), \; N \ge 1
$$
on $\S^n$.
\end{rem}


\end{document}